\documentclass{elsarticle}

\usepackage{latexsym,bm,amssymb,amsfonts,amsbsy,amsmath,graphics,color}

%
%
\newtheorem{theo}{Theorem}
\newtheorem{lem}{Lemma}

\newtheorem{rem}{Remark}

\newtheorem{defn}{Definition}
\def\RR{\mathbb R}
\def\CC{\mathbb C}
\def\pmatrix{ \left( \begin{array} }
\def\endpmatrix{ \end{array} \right) }

\def\bfy{{\bf y}}

\def\bfo{{\bf 0}}

\def\no{\noindent}
\def\diag{{\rm diag}}
\def\proof{\underline{Proof}\quad}
\def\QED{~\mbox{$\Box$}}

\def\phi{\varphi}

\def\P{{\cal P}}

\def\I{{\cal I}}
\def\P{{\cal P}}

\def\sigmd{{\dot\sigma}}
\def\O{\Omega}

\def\dd{\mathrm{d}}

\begin{document}

\title{The Lack of Continuity and the Role of Infinite and Infinitesimal in
Numerical Methods for ODEs: the Case of Symplecticity
\tnoteref{t1}}

\tnotetext[t1]{Work developed within the project ``Numerical methods and
software for differential equations''.}

\author[lb]{Luigi Brugnano}\ead{luigi.brugnano@unifi.it}
\author[fi]{Felice Iavernaro}\ead{felix@dm.uniba.it}
\author[dt]{Donato Trigiante}\ead{trigiant@unifi.it}

\address[lb]{Dipartimento di Matematica,
Universit\`a di Firenze, Viale Morgagni 67/A, 50134 Firenze
(Italy).}
\address[fi]{Dipartimento di Matematica, Universit\`a di
Bari, Via Orabona  4,  70125 Bari (Italy).}
\address[dt]{Dipartimento di Energetica, Universit\`a di
Firenze, Via Lombroso 6/17, 50134 Firenze (Italy).}

\begin{abstract}
When numerically integrating canonical Hamiltonian systems, the long-term
conservation of some of its invariants, among which the Hamiltonian function
itself, assumes a central role. The classical approach to this problem has led
to the definition of symplectic methods, among which we mention Gauss-Legendre
collocation formulae. Indeed, in the continuous setting,
energy conservation is derived from symplecticity via an {\em infinite}
number of {\em
infinitesimal contact transformations}. However, this infinite process cannot be
directly transferred to the discrete setting. By following a different
approach, in this paper we describe a sequence of methods, sharing the same
essential spectrum (and, then, the same essential properties), which are energy
preserving starting from a certain element of the sequence on,
i.e., after a finite number of steps.\end{abstract}

\begin{keyword} polynomial Hamiltonian \sep energy preserving methods
\sep symplectic methods \sep Hamiltonian Boundary Value Methods
\sep HBVMs \sep Runge-Kutta collocation methods

\medskip
\MSC 65P10 \sep  65L05 \sep 65L06
\end{keyword}

\maketitle

\section{Introduction}\label{intro} In order to make easier the
following considerations, it is necessary to stress the
differences between a continuous problem, for example
(\ref{hamilode}), and a discrete problem obtained by applying to
it a whatever numerical method.  The main difference between them, very often
underrated by many authors, is  the lack of continuity in time. The latter
 does not affect many aspects such as, e.g., the study
of the qualitative behavior of solutions around asymptotically
stable critical points (stability analysis). In fact, the
respective theories, with minor changes, are very similar (see,
e.g., \cite{LT}). As a consequence, many tools, already devised in
the continuous analysis, can be transferred to the discrete
analysis  almost unchanged. This is the case, for example, of the
linearization around asymptotically stable critical points, which
has been extensively used  in the numerical analysis of methods
for differential problems (linear stability analysis). There are,
however, other aspects for which continuity plays an essential
role. For example, two solutions of the continuous problem need to
stay away from each other while, in the discrete case, they may
interlace (without having common points, of course). This fact has
many mathematical and even physical implications (see, e.g.,
\cite{IT07}). In this paper we shall deal with another case in
which continuity plays an essential role. It regards the role of
symplecticity which is central in discussing energy conservation
in continuous Hamiltonian problems, while it is less crucial in
the energy conservation of discrete problems. This depends on the
interplay between infinitesimal contact transformations and the
need of infinite processes (number of iterations) which cannot be
operatively used in Numerical Analysis. This question has been
already discussed in previous papers (see, e.g., \cite{BITUOVO}).
Here, after a rapid introduction to the subject, we shall focus on
a particular aspect, although very important, which concerns a
property of the numerical methods, in the general class of
collocation methods, which comes out by no more requiring
symplecticity but still providing conservation of the Hamiltonian
functions on a subset of points of the mesh. Clearly, this permits
to avoid the drift of energy experienced when using many numerical
methods proposed in the recent literature.

With this premise, the structure of the paper is the following: in
Section~\ref{chp} we recall the basic facts about canonical
Hamiltonian problems and the approaches used for their numerical
solution; one of them, resulting in the recently introduced class
of {\em Hamiltonian Boundary Value Methods (HBVMs)}, is then
sketched in Section~\ref{hbvms}; in Section~\ref{iso} we state the
main result of this paper, concerning the {\em isospectral
property} of such methods; in Section~\ref{coll} such property is
further generalized to study the existing connections between
HBVMs and Runge-Kutta collocation methods; a few concluding
remarks are finally given in Section~\ref{fine}.

\section{Canonical Hamiltonian problems}\label{chp}

Canonical Hamiltonian problems are in the form
\begin{equation}\label{hamilode}\dot y = J\nabla H(y),  \qquad y(t_0) =
y_0\in\RR^{2m},
\end{equation}

\no where $J$ is a skew-symmetric constant matrix, the Hamiltonian $H(y)$ is
assumed to be sufficiently differentiable, and the state vector
splits into two blocks, $y=(q^T,p^T)^T$,
$q,p\in\RR^m$ where, for mechanical systems, $q$ denote the positions
and $p$ the (generalized) momenta.
Such problems are of great interest in many fields of application, ranging from
the macro-scale of celestial mechanics, to the micro-scale of molecular
dynamics. They have been deeply studied, from the point of view of the
mathematical analysis, since two centuries. Their numerical solution is a more
recent field of investigation, where the main difficulty in dealing
with them numerically  stems from the fact that the meaningful
isolated critical points of such systems are only marginally stable:
neighboring solution curves do not eventually approach the
equilibrium point either in future or in past times.
This implies that the geometry around them critically depends on
perturbations of the linear part. Consequently, the use of a
linear test equation, which essentially captures the geometry of
the linear part, whose utility has been enormous in settling the
dissipative case, cannot be of any utility in the present case.

It is then natural to look for other properties of Hamiltonian
systems that can be imposed on the discrete methods in order to
make them effective. The first property which comes to mind is the
symplecticity  of the flow $\varphi_t := y_0 \mapsto y(t)$
associated with \eqref{hamilode}. This property can be described either
in geometric form (invariance of areas, volumes, etc.) or in
analytical form: $$\left(\frac{\partial \varphi_t}{\partial
y_0}\right)^T J \left(\frac{\partial \varphi_t}{\partial
y_0}\right)=J.$$ In one way or the other, it essentially consists
in moving {\em infinitesimally} on the trajectories representing the
solutions. Infinitesimally means retaining only the linear part of
the {\em infinitesimal time displacement} $\delta t$. It can be shown
that this produces  new values of the variables $q+\delta q,$
$p+\delta p$ which leave unchanged the value of the Hamiltonian
$H(q+\delta q,p+\delta p)=H(q,p)$ ({\em Infinitesimal Contact
Transformation} (ICT), see \cite[p.\,386]{Gold}).
Consequently, since the composition of  such
{\em infinitesimal} transformations maintains the invariance, so
does an {\em infinite number} of them.

 It is  not
surprising that the first numerical attempts to design
conservative methods  have tried to transfer similar arguments to
discrete methods, i.e. to design  symplectic integrators
\cite{Ruth,Feng} (see also the monographs \cite{SC,LeRe,HLW}  for
more details on the subject).  A backward error analysis has shown
that symplecticity seems somehow to improve  the long-time
behavior properties of the numerical solutions. Indeed, for a
symplectic method of order $r$, implemented with a constant
stepsize $h$, the following estimation reveals how the numerical
solution $y_n$ may depart from the manifold $H(y)=H(y_0)$ of the
phase space, which contains the continuous solution itself:
\begin{equation}
\label{backward} H(y_n)-H(y_0) = O(nh\,e^{-\frac{h_0}{2h}} h^r),
\end{equation}
where {\em $h_0>0$ is sufficiently small} and $h\le h_0$. Relation
\eqref{backward} implies that a linear drift of the energy with
respect to time $t=nh$ may arise. However, due to the presence of
the exponential, such a drift will not appear as far as $nh\le
e^{\frac{h_0}{2h}}$: this circumstance is often referred to by
stating that  {\em symplectic methods conserve the energy function
on exponentially long time intervals} (see, for example,
\cite[Theorem 8.1, p.\,367]{HLW}). This is clearly a surrogate of the
definition of stability in that the ``good behaviour'' of the
numerical solution is not extended on {\em infinite} time intervals.

As matter of fact, since symplecticity requires an \emph{infinite}
sequence of \emph{infinitesimal contact transformations} it cannot
be transferred \emph{``sic et simpliciter"} to the discrete
methods, simply because infinite processes are not permitted in
Numerical Analysis. A more efficient approach would require to
design methods which avoid the necessity of using infinite
processes while preserving the constant of motion, i.e., yielding a numerical
solution belonging to the manifold
$H(y)=H(y_0)$.
In this paper we consider
a recently introduced class of methods of any high order that
provide energy conservation. More specifically, with any given
order, one can associate an infinite sequences of methods, differing
from each other for the number of internal mesh points that cover
the same time window, say  $[t_i, t_i+h]$: the more points
we include, the better the conservation properties of the method.
However, we show that, at least for polynomial Hamiltonian
functions, such a  process of increasing the number of internal
points is not infinite. In fact we show that there exists a finite
value of new added points starting from which the method become
conservative, whatever is the stepsize $h$ used.


The evolution of the approaches to the problem, i.e. to get
efficient energy-conserving methods, has been slow. As a matter of
fact, the first unsuccessful attempts to derive energy-preserving
Runge-Kutta methods culminated in the wrong general feeling that
such methods could not even exist \cite{IZ}. One of the first
successful attempts to solve the problem, outside the class of
Runge-Kutta methods, is represented by {\em discrete gradient
methods} (see \cite{McLQR} and references therein) which are
second order accurate. Purely algebraic approaches have been also
introduced (see, e.g., \cite{CFM}), without presenting any energy
preserving method. A further approach was considered in
\cite{QMcL}, where the {\em averaged vector field method} was
proposed and shown to conserve the energy function, although it is
only second order accurate. As was recently outlined in
\cite{M2AN}, approximating the integral appearing in such method
by means of a quadrature formula (based upon polynomial
interpolation) yields a family of second order Runge-Kutta
methods. These latter methods represent an instance of
energy-preserving Runge-Kutta methods for polynomial Hamiltonian
problems: their first appearance may be found in \cite{IP1}, under
the name of {\em $s$-stage trapezoidal methods}. Additional
examples of fourth and sixth-order conservative Runge-Kutta
methods (for polynomial Hamiltonians of suitable degree) were
presented in \cite{IP2,IT}. All such energy-conserving methods
have been derived  by means of the new concept of {\em discrete
line integral}.

The evolution of this
idea eventually led to the definition of {\em Hamiltonian Boundary
Value Methods (HBVMs)} \cite{BIT2,BIT,BIT1}, which is a wide class of methods
able to preserve, for the discrete solution, polynomial Hamiltonians of
arbitrarily high degree (and then, a {\em practical} conservation
of any sufficiently differentiable Hamiltonian). In more details,
in \cite{BIT} HBVMs defined at Lobatto nodes have been analysed,
whereas in \cite{BIT1} HBVMs defined at Gauss-Legendre abscissae
have been considered. In the last reference, it has been actually
shown that both formulae are essentially equivalent to each other,
since the order and stability properties of the methods turn out
to be independent of the abscissae distribution, and both methods
are equivalent, when the number of the so called {\em silent
stages} tends to {\em infinity}. In this paper this conclusion is
further supported, since we prove that HBVMs, when cast as
Runge-Kutta methods, are such that  the corresponding matrix of
the tableau has the nonzero eigenvalues coincident with those of
the corresponding Gauss-Legendre formula ({\em isospectral
property} of HBVMs). This property will be also used to further
analyse the existing connections between HBVMs and Runge-Kutta
collocation methods.

\section{Hamiltonian Boundary Value Methods}\label{hbvms}

The arguments in this section are worked out starting from those
used in \cite{BIT,BIT1} to introduce and analyse HBVMs.
Starting from the canonical Hamiltonian problems
(\ref{hamilode}), the key formula which HBVMs rely on, is the {\em line
integral} and the related property of conservative vector fields:
\begin{equation}\label{Hy}
H(y_1) - H(y_0) = h\int_0^1 \sigmd(t_0+\tau h)^T\nabla
H(\sigma(t_0+\tau h))\dd\tau,
\end{equation}

\no for any $y_1 \in \RR^{2m}$, where $\sigma$ is any smooth
function such that
\begin{equation}
\label{sigma}\sigma(t_0) = y_0, \qquad\sigma(t_0+h) = y_1.
\end{equation}

\no Here we consider the case where $\sigma(t)$ is a polynomial of
degree $s$, yielding an approximation to the true solution $y(t)$
in the time interval $[t_0,t_0+h]$. The numerical approximation
for the subsequent time-step, $y_1$, is then defined by
(\ref{sigma}).
After introducing a set of $s$ distinct abscissae,
\begin{equation}\label{ci}0<c_{1},\ldots ,c_{s}\le1,\end{equation}

\no we set
\begin{equation}\label{Yi}Y_i=\sigma(t_0+c_i h), \qquad
i=1,\dots,s,\end{equation}

\no so that $\sigma(t)$ may be thought of as an interpolation
polynomial, interpolating the {\em fundamental stages} $Y_i$,
$i=1,\dots,s$, at the abscissae (\ref{ci}). We observe that, due
to (\ref{sigma}), $\sigma(t)$ also interpolates the initial
condition $y_0$.

\begin{rem}\label{c0} Sometimes, the interpolation at $t_0$ is
explicitly  required. In such a case, the extra abscissa $c_0=0$ is
formally added to (\ref{ci}). This is the case, for example, of a
Lobatto distribution of the abscissae \cite{BIT}.\end{rem}

Let us consider the following expansions of $\dot \sigma(t)$ and
$\sigma(t)$ for $t\in [t_0,t_0+h]$:
\begin{equation}
\label{expan} \dot \sigma(t_0+\tau h) = \sum_{j=1}^{s} \gamma_j
P_j(\tau), \qquad \sigma(t_0+\tau h) = y_0 + h\sum_{j=1}^{s}
\gamma_j \int_{0}^\tau P_j(x)\,\dd x,
\end{equation}

\no where $\{P_j(t)\}$ is a suitable basis of the vector space of
polynomials of degree at most $s-1$ and  the  (vector)
coefficients $\{\gamma_j\}$ are to be determined.  We shall
consider an orthonormal polynomial basis on the interval $[0,1]$
(though, in principle, different bases could be also considered
\cite{BIT1,IP1,IT,IP2}):
\begin{equation}\label{orto}\int_0^1 P_i(t)P_j(t)\dd t = \delta_{ij}, \qquad
i,j=1,\dots,s,\end{equation}

\no where $\delta_{ij}$ is the Kronecker symbol, and $P_i(t)$ has
degree $i-1$. Such a basis can be readily obtained as
\begin{equation}\label{orto1}P_i(t) = \sqrt{2i-1}\,\hat P_{i-1}(t), \qquad
i=1,\dots,s,\end{equation}

\no with $\hat P_{i-1}(t)$ the shifted Legendre polynomial, of
degree $i-1$, on the interval $[0,1]$ (see, e.g., \cite{AS}).
We shall also assume that $H(y)$ is a polynomial, which implies
that the integrand in \eqref{Hy} is also a polynomial so that the
line integral can be exactly computed by means of a suitable
quadrature formula. It is easy to observe that in general, due to
the high degree of the integrand function,  such quadrature
formula cannot be solely based upon the available abscissae
$\{c_i\}$: one needs to introduce an additional set of abscissae
$\{\hat c_1, \dots,\hat c_r\}$, distinct from the nodes $\{c_i\}$,
in order to make the quadrature formula exact:
\begin{eqnarray} \label{discr_lin}
\displaystyle \lefteqn{\int_0^1 \sigmd(t_0+\tau h)^T\nabla
H(\sigma(t_0+\tau h))\mathrm{d}\tau   =}\\ && \sum_{i=1}^s \beta_i
\sigmd(t_0+c_i h)^T\nabla H(\sigma(t_0+c_i h)) + \sum_{i=1}^r \hat
\beta_i \sigmd(t_0+\hat c_i h)^T\nabla H(\sigma(t_0+\hat c_i h)),
\nonumber
\end{eqnarray}

\no where $\beta_i$, $i=1,\dots,s$, and $\hat \beta_i$,
$i=1,\dots,r$, denote the weights of the quadrature formula
defined at the abscissae $\{c_i\}\cup\{\hat c_i\}$.
Then, according to \cite{BIT,BIT1}, we give the following
definition.

\begin{defn}\label{defhbvmks}
The method defined by the polynomial $\sigma(t)$, determined by
substituting the quantities in \eqref{expan} into the right-hand
side of \eqref{discr_lin}, and by choosing the unknown coefficient
$\{\gamma_j\}$ in order that the resulting expression vanishes, is
called {\em Hamiltonian Boundary Value Method with $k$ steps and
degree $s$}, in short {\em HBVM($k$,$s$)}, where
$k=s+r$.\end{defn}

According to \cite{IT2}, the right-hand side of \eqref{discr_lin} is
called \textit{discrete line integral} associated with the map
defined by the HBVM($k$,$s$) method, while the vectors
\begin{equation}\label{hYi}
\hat Y_i \equiv \sigma(t_0+\hat c_i h), \qquad i=1,\dots,r,
\end{equation}

\no are called \textit{silent stages}: they just serve to increase,
as much as one likes, the degree of precision of the quadrature
formula, but they are not to be regarded as unknowns since, from
\eqref{expan} and (\ref{hYi}), they can be expressed in terms of
linear combinations of the fundamental stages (\ref{Yi}).

Because of the equality \eqref{discr_lin}, we can apply the
procedure described in Definition \ref{defhbvmks} directly to the
original line integral appearing in the left-hand side. With this
premise, by considering the first expansion in \eqref{expan},  the
conservation property reads
$$\sum_{j=1}^{s} \gamma_j^T \int_0^1  P_j(\tau) \nabla
H(\sigma(t_0+\tau h))\dd\tau=0,$$

\no which, as is easily checked,  is satisfied if we
impose the following set of orthogonality conditions:
\begin{equation}
\label{orth} \gamma_j = \int_0^1  P_j(\tau) J \nabla
H(\sigma(t_0+\tau h))\dd\tau, \qquad j=1,\dots,s.
\end{equation}

\no Then, from the second relation of \eqref{expan} we obtain, by
introducing the operator
\begin{eqnarray}\label{Lf}\lefteqn{L(f;h)\sigma(t_0+ch) =}\\
\nonumber && \sigma(t_0)+h\sum_{j=1}^s \int_0^c P_j(x) \dd x \,
\int_0^1 P_j(\tau)f(\sigma(t_0+\tau h))\dd\tau,\qquad
c\in[0,1],\end{eqnarray}

\no that $\sigma$ is the eigenfunction of $L(J\nabla H;h)$
relative to the eigenvalue $\lambda=1$:
\begin{equation}\label{L}\sigma = L(J\nabla H;h)\sigma.\end{equation}

\no According to \cite{BIT1}, (\ref{L}) is called the {\em Master
Functional Equation} defining $\sigma$: it characterizes HBVM$(k,s)$ methods,
for all $k\ge s$. Indeed, such methods are uniquely defined by the
polynomial $\sigma$, of degree $s$, the number of steps $k$ being
only required to obtain the exact quadrature formula
(\ref{discr_lin}).

To practically compute $\sigma$, we set (see (\ref{Yi}) and
(\ref{expan}))
\begin{equation}
\label{y_i} Y_i=  \sigma(t_0+c_i h) = y_0+ h\sum_{j=1}^{s} a_{ij}
\gamma_j, \qquad i=1,\dots,s,
\end{equation}

\no where
$$a_{ij}=\int_{0}^{c_i} P_j(x) \mathrm{d}x, \qquad
i,j=1,\dots,s.$$%

\no Inserting \eqref{orth} into \eqref{y_i} yields the final
formulae which define the HBVMs class based upon the orthonormal
basis $\{P_j\}$:
\begin{equation}
\label{hbvm_int} Y_i=y_0+h  \sum_{j=1}^s a_{ij}\int_0^1
P_j(\tau)  J \nabla H(\sigma(t_0+\tau h))\,\dd\tau, \qquad
i=1,\dots,s.
\end{equation}

For sake of completeness, we report the nonlinear system
associated with the HBVM$(k,s)$ method, in terms of the
fundamental stages $\{Y_i\}$ and the silent stages $\{\hat Y_i\}$
(see (\ref{hYi})), by using the notation
\begin{equation}\label{fy}
f(y) = J \nabla H(y).
\end{equation}

\no It  represents the discrete counterpart of \eqref{hbvm_int},
which may be directly retrieved by evaluating, for example, the
integrals in \eqref{hbvm_int} by means of the (exact) quadrature
formula introduced in \eqref{discr_lin}:
\begin{eqnarray}\label{hbvm_sys}
\lefteqn{ Y_i =}\\
&=& y_0+h\sum_{j=1}^s a_{ij}\left( \sum_{l=1}^s \beta_l
P_j(c_l)f(Y_l) + \sum_{l=1}^r\hat \beta_l P_j(\hat c_l) f(\widehat
Y_l) \right),\quad i=1,\dots,s.\nonumber
\end{eqnarray}

\no From the above discussion it is clear that, in the
non-polynomial case, supposing to choose the abscissae $\{\hat
c_i\}$ so that the sums in (\ref{hbvm_sys}) converge to an
integral as $r\equiv k-s$ tends to {\em infinity}, the resulting
formula is \eqref{hbvm_int}, which has been named {\em
$\infty$-HBVM of degree $s$} or {\em HBVM$(\infty,s)$} in
\cite{BIT1}. This implies that HBVMs may be as well applied in the
non-polynomial case since, in finite precision arithmetic, HBVMs
are undistinguishable from their limit formulae \eqref{hbvm_int},
when a sufficient number of silent stages is introduced, so that a
{\em practical} energy conservation is obtained, for $k$ large
enough \cite{BIT,BIT1,IP1,IT}. On the other hand, we emphasize
that, in the non-polynomial case, \eqref{hbvm_int} becomes an {\em
operative method} only after that a suitable strategy to
approximate the integrals appearing in it is taken into account.
In the present case, if one discretizes the {\em Master Functional
Equation} (\ref{Lf})--(\ref{L}), HBVM$(k,s)$ are then obtained,
essentially by extending the discrete problem (\ref{hbvm_sys})
also to the silent stages (\ref{hYi}). In more details, by using
(\ref{fy}) and introducing the following notation:
\begin{eqnarray*}
\{\tau_i\} = \{c_i\} \cup \{\hat{c}_i\}, &&
\{\omega_i\}=\{\beta_i\}\cup\{\hat\beta_i\},\\[2mm]
y_i = \sigma(t_0+\tau_ih), && f_i = f(\sigma(t_0+\tau_ih)), \qquad
i=1,\dots,k,
\end{eqnarray*}
the discrete problem defining the HBVM$(k,s)$ method becomes,
\begin{equation}\label{hbvmks}
y_i = y_0 + h\sum_{j=1}^s \int_0^{\tau_i} P_j(x)\dd x
\sum_{\ell=1}^k \omega_\ell P_j(\tau_\ell)f_\ell, \qquad
i=1,\dots,k.
\end{equation}

\no By defining the vectors ~$\bfy = (y_1^T,\dots,y_k^T)^T$~ and
~$e=(1,\dots,1)^T\in\RR^k$,~  and the matrices
\begin{equation}\label{OIP}\O=\diag(\omega_1,\dots,\omega_k), \qquad
\I_s,~\P_s\in\RR^{k\times s},\end{equation} whose $(i,j)$th entry
are given by
\begin{equation}\label{IDPO}
(\I_s)_{ij} = \int_0^{\tau_i} P_j(x)\mathrm{d}x, \qquad
(\P_s)_{ij}=P_j(\tau_i), \end{equation}

\no we can cast the set of equations (\ref{hbvmks}) in vector form
as $$\bfy = e\otimes y_0 + h(\I_s
\P_s^T\O)\otimes I_{2m}\, f(\bfy),$$

\no with an obvious meaning of $f(\bfy)$. Consequently, the method
can be regarded as a Runge-Kutta method with the following Butcher
tableau:
\begin{equation}\label{rk}
\begin{array}{c|c}\begin{array}{c} \tau_1\\ \vdots\\ \tau_k\end{array} & \I_s \P_s^T\O\\
 \hline                    &\omega_1\, \dots~ \omega_k
                    \end{array}\end{equation}
In particular, when a Gauss distribution of the abscissae
$\{\tau_1,\dots,\tau_k\}$ is considered, it can be proved that the
resulting HBVM$(k,s)$ method \cite{BIT1} (see also \cite{BIT,HBVMsHome}):

\begin{itemize}

\item has order $2s$ for all $k\ge s$;

\item is symmetric and perfectly $A$-stable (i.e., its
stability region coincides with the left-half complex plane,
$\CC^-$ \cite{BT});

\item reduces to the Gauss-Legendre method of order $2s$, when
$k=s$;

\item exactly preserves polynomial Hamiltonian functions of degree $\nu$,
provided that \begin{equation}\label{knu}k\ge \frac{\nu
s}2.\end{equation}

\end{itemize}

\section{The Isospectral Property}\label{iso}

We are now going to prove a further additional result, related to
the matrix appearing in the Butcher tableau (\ref{rk}),
 i.e., the matrix
\begin{equation}\label{AMAT}A = \I_s \P_s^T\O\in\RR^{k\times
k}, \qquad k\ge s,\end{equation}

\no whose rank is $s$. Consequently it has a $(k-s)$-fold zero
eigenvalue. In this section, we are going to discuss its {\em essential
spectrum}, i.e., the location of the remaining $s$ nonzero eigenvalues of that
matrix. Before that, we state a couple of preliminary results: their proofs
follow, respectively, from \cite[Theorem\,5.6, p.\,83]{HW} and from the
properties of shifted Legendre polynomials (see, e.g., \cite{AS} or the Appendix
in \cite{BIT}).

\begin{lem}\label{gauss} The eigenvalues of the matrix
\begin{equation}\label{Xs}
X_s = \pmatrix{cccc}
\frac{1}2 & -\xi_1 &&\\
\xi_1     &0      &\ddots&\\
          &\ddots &\ddots    &-\xi_{s-1}\\
          &       &\xi_{s-1} &0\\
\endpmatrix, \end{equation} with
\begin{equation}\label{xij}\xi_j=\frac{1}{2\sqrt{(2j+1)(2j-1)}}, \qquad
j\ge1,\end{equation} coincide with those of the matrix in the
Butcher tableau of the Gauss-Legendre method of order
$2s$.\end{lem}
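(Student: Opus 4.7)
The plan is to exhibit an explicit similarity carrying the Gauss-Legendre Butcher matrix $A_{GL}$ onto $X_s$, via the $W$-transformation of Hairer and Wanner. Let $c_1<\cdots<c_s$ be the Gauss-Legendre abscissae on $[0,1]$ and $b_1,\dots,b_s$ the corresponding weights; define $W\in\RR^{s\times s}$ by $W_{ij}=P_{j-1}(c_i)$, where $\{P_0,P_1,\dots\}$ is the orthonormal shifted Legendre basis (\ref{orto})--(\ref{orto1}). Since the Gauss formula is exact for polynomials of degree $\le 2s-1$, orthonormality immediately yields $W^T\diag(b_1,\dots,b_s)W=I_s$, so $W$ is invertible and its inverse is $W^T\diag(b_1,\dots,b_s)$.

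First I would establish the integration--recurrence identity for shifted Legendre polynomials,
\[
\int_0^c P_j(x)\,\dd x \;=\; \xi_j\,P_{j+1}(c)\,-\,\xi_{j-1}\,P_{j-1}(c), \qquad j\ge 1,
\]
together with $\int_0^c P_0(x)\,\dd x = c = \xi_1 P_1(c) + \tfrac12$; the coefficients $\xi_j$ are exactly those of (\ref{xij}). This is a direct consequence of the classical three-term recurrence and the relation $(P_{j+1}-P_{j-1})' \propto P_j$ for Legendre polynomials, and may also be quoted from \cite{AS}.

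Next I would compute $A_{GL}W$. Since the Gauss-Legendre method is collocation at $c_1,\dots,c_s$, one has $(A_{GL})_{ij}=\int_0^{c_i}\ell_j(x)\,\dd x$ with $\ell_j$ the Lagrange basis at the $c_i$; expanding each $\ell_j$ on $\{P_0,\dots,P_{s-1}\}$ and applying the integration identity above produces, in the column indexed by $j=s$, a single ``leftover'' term proportional to $\xi_s P_s(c_i)$. The crucial observation is that the Gauss-Legendre abscissae are precisely the roots of $P_s$, so $P_s(c_i)=0$ for all $i$ and this term vanishes. What remains assembles exactly into $WX_s$, so $A_{GL}W=WX_s$, i.e.\
\[
W^{-1}A_{GL}W \;=\; X_s,
\]
and the two matrices share their spectrum.

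The main obstacle is the bookkeeping of the boundary contributions in the $\int_0^c P_j$ formula---in particular the constant term produced by $j=0$ (which explains the $\tfrac12$ in the top-left entry of $X_s$) and the ``missing'' contribution at $j=s-1\to s$---together with verifying that precisely this last piece is annihilated by the Gauss-node property $P_s(c_i)=0$. Once this matching is carried out, the lemma follows immediately from the invariance of the spectrum under similarity.
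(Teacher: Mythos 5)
Your argument is correct and is essentially the proof of the result the paper itself invokes: the paper disposes of this lemma by citing Theorem 5.6 of Hairer--Wanner, whose proof is exactly the $W$-transformation similarity $W^{-1}A_{GL}W=X_s$ that you reconstruct, including the key use of $P_s(c_i)=0$ at the Gauss nodes to annihilate the leftover term in the last column. One small slip to fix: with your degree-indexed basis the integration identity should read $\int_0^c P_j(x)\,\dd x=\xi_{j+1}P_{j+1}(c)-\xi_j P_{j-1}(c)$ for $j\ge1$ (as stated, your formula at $j=1$ involves the undefined $\xi_0$); with the indices corrected, the leftover in column $s$ is indeed $\xi_s P_s(c_i)$ and the bookkeeping closes as you describe.
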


\begin{lem}\label{intleg} With reference to the matrices in
(\ref{OIP})--(\ref{IDPO}), one has
$$\I_s = \P_{s+1}\hat{X}_s,$$
where
$$\hat{X}_s = \pmatrix{cccc}
\frac{1}2 & -\xi_1 &&\\
\xi_1     &0      &\ddots&\\
          &\ddots &\ddots    &-\xi_{s-1}\\
          &       &\xi_{s-1} &0\\
\hline &&&\xi_s\endpmatrix,$$

\no with the $\xi_j$ defined by (\ref{xij}). \end{lem}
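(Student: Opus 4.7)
The plan is to verify the matrix identity column by column, which reduces to expressing the antiderivative $\int_0^t P_j(x)\dd x$ of each orthonormal shifted Legendre polynomial $P_j$ in the basis $\{P_1,\dots,P_{s+1}\}$. Concretely, since $(\I_s)_{ij}=\int_0^{\tau_i}P_j(x)\dd x$ and $(\P_{s+1}\hat X_s)_{ij}=\sum_{\ell=1}^{s+1}P_\ell(\tau_i)(\hat X_s)_{\ell j}$, and since both sides are polynomials in $\tau_i$ of degree at most $s$, it is enough to establish the polynomial identity
\begin{equation*}
\int_0^t P_j(x)\dd x \;=\; \sum_{\ell=1}^{s+1}(\hat X_s)_{\ell j}\,P_\ell(t),\qquad j=1,\dots,s,\quad t\in\RR,
\end{equation*}
and then evaluate at $t=\tau_i$ for $i=1,\dots,k$.

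To produce this expansion, I would invoke the classical differentiation/integration relation for shifted Legendre polynomials on $[0,1]$, namely $\int_0^t \hat P_n(x)\dd x = \frac{1}{2(2n+1)}[\hat P_{n+1}(t)-\hat P_{n-1}(t)]$ for $n\ge 1$ (which follows from the three-term derivative identity $\frac{\dd}{\dd t}[\hat P_{n+1}-\hat P_{n-1}]=2(2n+1)\hat P_n$ and the fact that $\hat P_k(0)=(-1)^k$, so the boundary contribution vanishes). Combined with the normalization $P_j=\sqrt{2j-1}\,\hat P_{j-1}$ from (\ref{orto1}), a direct computation gives, for $j\ge 2$,
\begin{equation*}
\int_0^t P_j(x)\dd x \;=\; \frac{1}{2\sqrt{(2j-1)(2j+1)}}\,P_{j+1}(t) - \frac{1}{2\sqrt{(2j-1)(2j-3)}}\,P_{j-1}(t) = \xi_j P_{j+1}(t) - \xi_{j-1} P_{j-1}(t),
\end{equation*}
which exactly matches column $j$ of $\hat X_s$ (the nonzero entries being $-\xi_{j-1}$ in row $j-1$ and $\xi_j$ in row $j+1$, per (\ref{xij})). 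The case $j=1$ must be treated separately: there $P_1\equiv 1$, so $\int_0^t P_1(x)\dd x = t = \tfrac12 P_1(t) + \xi_1 P_2(t)$, which matches column $1$ of $\hat X_s$ (entries $\tfrac12$ in row $1$ and $\xi_1$ in row $2$).

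Putting these two expansions together, we obtain $\int_0^t P_j(x)\dd x = \sum_{\ell=1}^{s+1}(\hat X_s)_{\ell j}P_\ell(t)$ for every $j=1,\dots,s$; substituting $t=\tau_i$ yields $(\I_s)_{ij}=(\P_{s+1}\hat X_s)_{ij}$ for all $i,j$, which is the claimed identity. The only mildly delicate step is handling the boundary terms at $t=0$ when integrating the derivative identity—there is genuine cancellation from the parity $\hat P_k(0)=(-1)^k$ for $j\ge 2$, whereas for $j=1$ the antiderivative introduces the nontrivial diagonal entry $\tfrac12$; everything else is bookkeeping of the normalization factors $\sqrt{2j\pm 1}$.
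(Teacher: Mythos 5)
Your proof is correct and is precisely the argument the paper intends: the paper itself omits the proof of this lemma, merely citing ``the properties of shifted Legendre polynomials,'' and the properties in question are exactly the integration identity $\int_0^t \hat P_n(x)\,\mathrm{d}x = \frac{1}{2(2n+1)}[\hat P_{n+1}(t)-\hat P_{n-1}(t)]$ together with the normalization (\ref{orto1}) that you use. Your column-by-column verification, including the separate treatment of $j=1$ (which produces the $\tfrac12$ diagonal entry) and the cancellation of boundary terms via $\hat P_k(0)=(-1)^k$, checks out in every detail.
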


\no The following result then holds true.

\begin{theo}[Isospectral Property of HBVMs]\label{mainres}
For all $k\ge s$ and for any choice of the abscissae $\{\tau_i\}$
such that the quadrature defined by the weights $\{\omega_i\}$ is
exact for polynomials of degree $2s-1$, the nonzero eigenvalues of
the matrix $A$ in (\ref{AMAT}) coincide with those of matrix
(\ref{Xs}), characterizing the Gauss-Legendre method of order $2s$.
\end{theo}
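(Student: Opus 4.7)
The plan is to reduce the spectral problem from the $k \times k$ matrix $A = \I_s\P_s^T\O$ (which has a $(k-s)$-fold zero eigenvalue for free) to an $s \times s$ matrix whose nonzero spectrum is easy to identify with $X_s$. The key algebraic identity I will exploit is that, for any rectangular factorization $A = BC$ with $B \in \RR^{k\times s}$ and $C \in \RR^{s\times k}$, the nonzero eigenvalues of $BC$ and $CB$ coincide (with multiplicities). Applied here with $B = \I_s$ and $C = \P_s^T\O$, this reduces the question to computing the $s \times s$ matrix
$$ M \;:=\; \P_s^T\,\O\,\I_s. $$

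First I would evaluate the closely related rectangular product $\P_s^T\,\O\,\P_{s+1}$. Its $(i,j)$ entry is $\sum_{\ell=1}^k \omega_\ell P_i(\tau_\ell) P_j(\tau_\ell)$, i.e.\ the quadrature approximation to $\int_0^1 P_i(t) P_j(t)\,\dd t$. For $1 \le i \le s$ and $1 \le j \le s+1$ the integrand is a polynomial of degree $(i-1)+(j-1) \le 2s-1$, so the hypothesis on the weights $\{\omega_i\}$ makes the quadrature exact, and the orthonormality relation \eqref{orto} then yields
$$ \P_s^T\,\O\,\P_{s+1} \;=\; \bigl[\,I_s \,\bigm|\, \bfo\,\bigr] \;\in\;\RR^{s\times(s+1)}, $$
where $\bfo$ is the zero column. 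Next, I would invoke Lemma~\ref{intleg} to write $\I_s = \P_{s+1}\hat{X}_s$, so that
$$ M \;=\; \P_s^T\,\O\,\P_{s+1}\,\hat{X}_s \;=\; \bigl[\,I_s \,\bigm|\, \bfo\,\bigr]\,\hat{X}_s. $$
By inspection of the block form of $\hat{X}_s$, multiplying on the left by $[\,I_s\mid \bfo\,]$ strips away its final row (the one containing $\xi_s$), leaving exactly the matrix $X_s$ of \eqref{Xs}. Hence $M = X_s$.

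Combining the two steps, the nonzero eigenvalues of $A$ coincide with the eigenvalues of $M = X_s$, which by Lemma~\ref{gauss} are precisely those of the Gauss--Legendre Butcher matrix of order $2s$. A minor subtlety worth checking is that all eigenvalues of $X_s$ are themselves nonzero (so that no multiplicity is lost when passing between the spectra of $BC$ and $CB$); this follows because $X_s$ is similar to the Butcher matrix of the Gauss--Legendre method, which is well known to be nonsingular.

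The only genuinely delicate point is the degree bookkeeping that underlies the identity $\P_s^T\O\P_{s+1} = [\,I_s\mid\bfo\,]$: one must confirm that $(s-1)+s = 2s-1$ stays within the exactness range of the quadrature, which is exactly the assumption of the theorem, so this is sharp rather than conservative. Everything else is a direct chain of matrix manipulations using the two preliminary lemmas.
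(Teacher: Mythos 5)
Your proof is correct, and its computational core is identical to the paper's: both arguments hinge on the identity $\P_s^T\O\P_{s+1}=\left(I_s~\bfo\right)$ (obtained from orthonormality plus exactness of the quadrature up to degree $2s-1$) combined with Lemma~\ref{intleg}, $\I_s=\P_{s+1}\hat X_s$. Where you genuinely diverge is in the final spectral step. The paper shows $A\P_{s+1}=\P_{s+1}\widetilde X_s$, so that the columns of $\P_{s+1}$ span an invariant subspace on which $A$ acts as $\widetilde X_s=\left(\hat X_s~\bfo\right)$, whose spectrum is that of $X_s$ together with $0$; it then appeals to the rank of $A$ to identify these as \emph{all} the nonzero eigenvalues. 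You instead use the standard fact that for $B\in\RR^{k\times s}$, $C\in\RR^{s\times k}$ the nonzero spectra of $BC$ and $CB$ agree with multiplicities, reducing everything to the $s\times s$ matrix $\P_s^T\O\I_s=\left(I_s~\bfo\right)\hat X_s=X_s$. Your route buys two small things: it gives the characteristic polynomial factorization $\det(\lambda I_k-A)=\lambda^{k-s}\det(\lambda I_s-X_s)$ outright, so multiplicities (including the exact multiplicity $k-s$ of the zero eigenvalue, given the nonsingularity of $X_s$, which you rightly check) come for free, and it treats $k=s$ and $k>s$ uniformly, whereas the paper handles $k=s$ as a separate case and its invariant-subspace argument implicitly needs $\P_{s+1}$ to have full column rank. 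The paper's version, in exchange, exhibits the explicit invariant subspace $\mathrm{span}(\P_{s+1})$, which is reused in the subsequent analysis of collocation methods. No gaps.
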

\begin{proof}
For $k=s$, the abscissae $\{\tau_i\}$ have to be the $s$
Gauss-Legendre nodes, so that HBVM$(s,s)$ reduces to the Gauss
Legendre method of order $2s$, as already outlined at the end of
Section~\ref{hbvms}.
When $k>s$, from the orthonormality of the basis, see
(\ref{orto}), and considering that the quadrature with weights
$\{\omega_i\}$ is exact for polynomials of degree (at least)
$2s-1$, one obtains that (see (\ref{OIP})--(\ref{IDPO})) for all
~$i=1,\dots,s$~ and ~$j=1,\dots,s+1$,
$$\left(\P_s^T\O\P_{s+1}\right)_{ij} = \sum_{\ell=1}^k \omega_\ell
P_i(t_\ell)P_j(t_\ell)=\int_0^1 P_i(t)P_j(t)\dd t = \delta_{ij},$$
and, therefore,
$$\P_s^T\O\P_{s+1} = \left( I_s ~ \bfo\right).$$
By taking into account the result of Lemma~\ref{intleg}, one
then obtains:
\begin{eqnarray}\nonumber
A\P_{s+1} &=& \I_s \P_s^T\O\P_{s+1} = \I_s \left(I_s~\bfo\right)
=\P_{s+1}
\hat{X}_s \left(I_s~\bfo\right) = \P_{s+1}\left(\hat{X}_s~\bfo\right)\\
 &=& \P_{s+1}
\pmatrix{cccc|c}
\frac{1}2 & -\xi_1 && &0\\
\xi_1     &0      &\ddots& &\vdots\\
          &\ddots &\ddots    &-\xi_{s-1}&\vdots\\
          &       &\xi_{s-1} &0&0\\
\hline &&&\xi_s&0\endpmatrix ~\equiv~ \P_{s+1}\widetilde X_s,
\label{tXs}
\end{eqnarray}

\no with the $\{\xi_j\}$ defined according to (\ref{xij}).
Consequently, one obtains that the columns of $\P_{s+1}$
constitute a basis of an invariant (right) subspace of matrix $A$,
so that the eigenvalues of $\widetilde X_s$ are eigenvalues of
$A$. In more detail, the eigenvalues of $\widetilde X_s$ are those
of $X_s$ (see (\ref{Xs})) and the zero eigenvalue. Then, also in
this case, the nonzero eigenvalues of $A$ coincide with those of
$X_s$, i.e., with the eigenvalues of the matrix defining the
Gauss-Legendre method of order $2s$.\QED\end{proof}

It turns out that such methods, in the form here presented (i.e., having chosen
the polynomial basis (\ref{orto1})), can be regarded as a generalization of
Gauss methods, in the sense that, they share the same nonzero spectrum, for all
$k\ge s$. In the limit $k\rightarrow\infty$, the same essential spectrum is
retained by the limit operator (see (\ref{L})). In the case of a polynomial
Hamiltonian, such sequence of methods  starts to be energy-preserving for a
finite value of $k$. Moreover, even though  for a general Hamiltonian the
method becomes energy-preserving in the limit $k\rightarrow\infty$,
nevertheless, when using finite precision arithmetic, the limit is practically
obtained for a finite value of $k$, namely as soon as full machine precision
accuracy is achieved.

\section{HBVMs and Runge-Kutta collocation methods}\label{coll}

By using the previous results and notations, we now further
elucidate the existing connections between HBVMs and Runge-Kutta
collocation methods.
Our starting point is a generic collocation method with $k$
stages, defined by the tableau
\begin{equation}
\label{collocation_rk}
\begin{array}{c|c}\begin{array}{c} \tau_1\\ \vdots\\ \tau_k\end{array} &  \mathcal A \\
 \hline                    &\omega_1\, \ldots  ~ \omega_k
\end{array}
\end{equation}

\no where, for $i,j=1,\dots,k$: $$\mathcal A=
(\alpha_{ij})\equiv\left(\int_0^{\tau_i} \ell_j(x) \mathrm{d}x
\right), \qquad \omega_j=\int_0^{1} \ell_j(x) \mathrm{d}x,$$

\no $\ell_j(\tau)$ being the $j$th Lagrange polynomial of degree
$k-1$ defined on the set of abscissae $\{\tau_i\}$. Moreover,
given a positive integer $s\le k$, and considering the
matrices defined in (\ref{OIP})--(\ref{IDPO}), we consider the matrix
$$\P_s \P_s^T\Omega\in\RR^{k\times k}$$

\no with projects into the $s$-dimensional subspace spanned by the columns of
$\P_s$. The class of Runge-Kutta methods we are interested in,
is then defined by the tableau
\begin{equation}
\label{hbvm_rk1}
\begin{array}{c|c}\begin{array}{c} \tau_1\\ \vdots\\ \tau_k\end{array} &
A \equiv \mathcal A (\P_s \P_s^T \Omega)\\
 \hline       &\omega_1\, \ldots \ldots ~ \omega_k
\end{array}
\end{equation}

\no We note that the Butcher array $A$ has rank which cannot exceed
$s$, because it is defined by {\em filtering} $\mathcal A$ by the
rank $s$ matrix $\P_s \P_s^T \Omega$.
The following result then holds true, which clarifies the existing
connections between classical Runge-Kutta collocation methods and
HBVMs.

\begin{theo}\label{collhbvm} Provided that the quadrature formula defined by the
weights $\{\omega_i\}$ is exact for polynomials at least $2s-1$
(i.e., the Runge-Kutta method defined by the tableau
(\ref{hbvm_rk1}) satisfies the usual simplifying assumption
$B(2s)$), then the tableau (\ref{hbvm_rk1}) defines a HBVM$(k,s)$
method based at the abscissae $\{\tau_i\}$.
\end{theo}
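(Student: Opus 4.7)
The plan is to show that the Butcher tableau (\ref{hbvm_rk1}) coincides with the HBVM$(k,s)$ tableau displayed in (\ref{rk}). Since both carry the same abscissae $\{\tau_i\}$ and the same weights $\{\omega_i\}$ by construction, the entire task reduces to verifying the matrix identity
$$\mathcal{A}\,\P_s\P_s^T\O \;=\; \I_s\P_s^T\O.$$
I would in fact prove the stronger identity $\mathcal{A}\,\P_s = \I_s$, which trivially entails the one above after right-multiplication by $\P_s^T\O$.

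The core step is a Lagrange-interpolation argument. Starting from the definitions of $\mathcal{A}$ and $\P_s$, I would expand entry-wise
$$(\mathcal{A}\,\P_s)_{ij} \;=\; \sum_{l=1}^k \left(\int_0^{\tau_i}\ell_l(x)\,\dd x\right) P_j(\tau_l) \;=\; \int_0^{\tau_i}\left(\sum_{l=1}^k \ell_l(x)\,P_j(\tau_l)\right)\dd x,$$
interchanging sum and integral. For $j=1,\dots,s$, the basis polynomial $P_j$ has degree $j-1\le s-1\le k-1$, and $\{\ell_l\}_{l=1}^k$ is the Lagrange basis associated with the $k$ distinct nodes $\{\tau_l\}$; hence the bracketed sum reproduces $P_j(x)$ exactly. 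Consequently
$$(\mathcal{A}\,\P_s)_{ij} \;=\; \int_0^{\tau_i} P_j(x)\,\dd x \;=\; (\I_s)_{ij},$$
which is precisely $\mathcal{A}\,\P_s=\I_s$, and the identification of the two Butcher arrays follows.

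The assumption $B(2s)$ does not actually enter this matching of tableaux (the identity $\mathcal{A}\,\P_s=\I_s$ needs only $s\le k$ and distinctness of the nodes); its role is to ensure that the quadrature rule underlying the coefficients $\{\omega_i\}$ is exact enough for the resulting scheme to qualify as a genuine HBVM$(k,s)$ in the sense of Definition \ref{defhbvmks}, including the orthogonality $\P_s^T\O\P_{s+1}=(I_s\ \bfo)$ used in Theorem \ref{mainres}. I do not foresee any real obstacle: the argument is essentially one exchange of sum and integral, combined with exactness of Lagrange interpolation for polynomials of degree less than $k$. The only mild care is to treat the collocation matrix $\mathcal{A}$ through its Lagrange representation, which makes the reduction to $\I_s$ transparent.
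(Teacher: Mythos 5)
Your proposal is correct and follows essentially the same route as the paper: both arguments expand $P_j$ (degree $\le s-1<k$) in the Lagrange basis over the $k$ nodes to obtain $\mathcal{A}\,\P_s=\I_s$, and then right-multiply by $\P_s^T\O$ to identify the tableau (\ref{hbvm_rk1}) with (\ref{rk}). Your added observation that $B(2s)$ is not needed for the tableau identity itself, but only for the resulting scheme to enjoy the HBVM conservation/orthogonality properties, is accurate and consistent with how the hypothesis is used elsewhere in the paper.
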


\proof Let us expand the basis $\{P_1(\tau),\dots,P_s(\tau)\}$
along the Lagrange basis $\{\ell_j(\tau)\}$, $j=1,\dots,k$,
defined over the nodes $\tau_i$, $i=1,\dots,k$: $$
P_j(\tau)=\sum_{r=1}^k P_j(\tau_r) \ell_r(\tau),
 \qquad j=1,\dots,s.$$

\no It follows that, for $i=1,\dots,k$ and $j=1,\dots,s$:
$$\int_0^{\tau_i} P_j(x) \mathrm{d}x = \sum_{r=1}^k P_j(\tau_r)
\int_0^{\tau_i} \ell_r(x) \mathrm{d}x = \sum_{r=1}^k P_j(\tau_r)
\alpha_{ir},$$

\no that is (see (\ref{OIP})--(\ref{IDPO}) and (\ref{collocation_rk})),
$\I_s = \mathcal A \P_s$. Consequently,
$${\mathcal A}\P_s\P_s^T\O = \I_s\P_s^T\O,$$
so that one retrieves the tableau (\ref{rk}) which defines the method
HBVM$(k,s)$.\QED

\medskip
The resulting Runge-Kutta method \eqref{hbvm_rk1} is then energy
conserving if applied to polynomial Hamiltonian systems
\eqref{hamilode}, when the degree of $H(y)$ is lower than or equal
to a quantity, say $\nu$, depending on $k$ and $s$. As an example,
when a Gaussian distribution of the nodes $\{\tau_i\}$ is
considered, one obtains (\ref{knu}) and, moreover, HBVM$(k,s)$ is
also related to the Gauss-Legendre method of order $2k$, according
to (\ref{hbvm_rk1}), whose Butcher array coincides with $\cal A$,
with this choice of the nodes $\{\tau_i\}$.

\begin{rem}It seems like the price paid to achieve such conservation property
consists in the lowering of the order of the new method with
respect to the original one \eqref{collocation_rk}. Actually this
is not true,  because a fair comparison would be to relate method
\eqref{rk}--\eqref{hbvm_rk1}  to a collocation method constructed
on $s$ rather than on $k$  stages, since the actual nonlinear
system, deriving by the implementation of HBVM$(k,s)$, turns out to have
dimension $s$, as it has been shown in \cite{BIT}.
\end{rem}

Further implications of the isospectral property of HBVMs, among
which an alternative proof for their order of convergence, may be
found in \cite{BIT3}. A further alternative proof can be found in
\cite{BITUOVO,BITframe}.

\section{Conclusions}\label{fine}
In this paper, we have shown that the recently introduced class of
energy-preserving methods
$\{$HBVM$(k,s)\}$, when recast as Runge-Kutta methods, have the matrix
of the corresponding Butcher tableau sharing the same nonzero
eigenvalues which, in turn, coincides with those of the matrix of
the Butcher tableau of the Gauss method of order $2s$, for all
$k\ge s$ such that $B(2s)$ holds.

Moreover, HBVM$(k,s)$ defined at the Gaussian nodes
$\{\tau_1,\dots,\tau_k\}$ on the interval $[0,1]$ are closely
related to the Gauss method of order $2k$ which, although symplectic, is not in
general energy-preserving.

\end{document}